\date{\today}
\newtheorem{theorem}{Theorem}
\newtheorem{corollary}[theorem]{Corollary}
\theoremstyle{definition}
\newtheorem{remark}[theorem]{Remark}
\begin{document}

\title[On $\mathscr{H}$-complete topological semilattices]{On $\mathscr{H}$-complete topological semilattices}

\author{S.\ Bardyla and O.\ Gutik}
\address{Department of Mathematics, Ivan Franko Lviv National
University, Universytetska 1, Lviv, 79000, Ukraine}
\email{sbardyla@yahoo.com, o\underline{\hskip5pt}\,gutik@franko.lviv.ua,
ovgutik@yahoo.com}

\keywords{Topological semilattice, free filter, complete semigroup, chain}

\subjclass[2010]{22A26, 06B30, 06F30, 54H12, 54C10}

\begin{abstract}
In the paper we describe the structure of $\mathscr{A\!H}$-completions and $\mathscr{H}$-completions of the discrete semilattices $(\mathbb{N},\min)$ and $(\mathbb{N},\max)$. We give an example of an $\mathscr{H}$-complete topological semilattice which is not $\mathscr{A\!H}$-complete. Also we construct an $\mathscr{H}$-complete topological semilattice of cardinality $\lambda$ which has $2^\lambda$ many open-and-closed continuous homomorphic images which are not $\mathscr{H}$-complete topological semilattices. The constructed examples give a negative answer to Question~17 from \cite{Stepp1975}.
\end{abstract}

\maketitle

In this paper all topological spaces will be assumed to be Hausdorff. We shall follow the terminology of~\cite{CHK,
Engelking1989}, and \cite{GHKLMS}. For a subset $A$ of a topological space $X$ by $\operatorname{cl}_X(A)$ we denote the closure of $A$ in $X$. A filter $\mathscr{F}$ on a set $S$ is called \emph{free} if $\bigcap\mathscr{F}=\varnothing$.

A {\em semilattice} is a set endowed with a commutative idempotent associative operation. If $E$ is a semilattice, then the semilattice operation on $E$ determines the partial order $\leqslant$ on $E$:
\begin{equation*}
e\leqslant f \qquad\text{if and only if} \qquad ef=fe=e.
\end{equation*}
This order is called {\em natural}. An element $e$ of a semilattice $E$ is called {\em minimal} ({\em maximal\/}) if $f\leqslant e$ ($e\geqslant f$) implies $f=e$ for $f\in E$. A semilattice $E$ is said to be {\em linearly ordered} or a \emph{chain} if the natural order on $E$ is linear.

If $S$ is a topological space equipped with a continuous semigroup operation then $S$ is called a {\em topological semigroup}. A {\em topological semilattice} is a topological semigroup which is algebraically a semilattice.

Let $\mathscr{T\!S}$ be a category whose objects are topological semigroups and morphisms are homomorphisms between topological semigroups. A topological semigroup $X\in\operatorname{Ob}\mathscr{T\!S}$ is called {\em $\mathscr{T\!S}$-complete} if for each object $Y\in\operatorname{Ob}\mathscr{T\!S}$ and a morphism $f\colon X\rightarrow Y$ of the category $\mathscr{T\!S}$ the image $f(X)$ is closed in $Y$.

By a {\em $\mathscr{T\!S}$-completion} of a topological semigroup $X$ we understand any $\mathscr{T\!S}$-complete topological semigroup $\tilde X\in\operatorname{Ob}\mathscr{T\!S}$ containing $X$ as a dense subsemigroup. A $\mathscr{T\!S}$-comp\-le\-tion $\widetilde{X}$ of $X$ is called {\em universal} if each continuous homomorphism $h\colon X\rightarrow Y$ to a $\mathscr{T\!S}$-complete topological semigroup $Y\in\operatorname{Ob}\mathscr{T\!S}$ extends to a continuous homomorphism $\tilde h\colon\widetilde{X}\rightarrow Y$.

It is well-known that for the category $\mathscr{T\!G}$ of topological groups and their continuous homomorphisms, each object $G\in\operatorname{Ob}\mathscr{T\!G}$ has a $\mathscr{T\!G}$-completion and each $\mathscr{T\!G}$-completion of $G$ is universal \cite{Raikov1946}.

In the category of topological semigroups the situation is totally different. We show this on the example of the discrete topological semigroups $(\mathbb{N},\min)$ and $(\mathbb{N},\max)$. We shall study $\mathscr{H}$-completions and $\mathscr{A\!H}$-completions of discrete topological semigroup $(\mathbb{N},\min)$ and $(\mathbb{N},\max)$ in the category $\mathscr{A\!H}$ (resp. $\mathscr{H}$)  whose objects are Hausdorff topological semigroups and morphisms are continuous homomorphisms (resp. isomorphic topological embeddings) between topological semigroups.

The notion of $\mathscr{H}$-completion was introduced by Stepp in \cite{Stepp1969}, where he showed that for each locally compact topological semigroup $S$ there exists an $\mathscr{H}$-complete topological semigroup $T$ which contains $S$ as a dense subsemigroup.

Stepp \cite{Stepp1975} proved that a discrete semilattice $E$ is $\mathscr{H}$-complete if and only if any maximal chain in $E$ is finite. In \cite{GutikPavlyk2003} Gutik and Pavlyk remarked that a topological semilattice is $\mathscr{H}$-complete ($\mathscr{A\!H}$-complete) if and only if it is ($\mathscr{A\!H}$-complete) as a topological semigroup.
In~\cite{GutikRepovs2008} Gutik and Repov\v{s} studied properties of
linearly ordered $\mathscr{H}$-complete topological semilattices and
proved the following characterization theorem:

\begin{theorem}[{\cite[Theorem~2]{GutikRepovs2008}}]\label{theorem-1}
A linearly ordered topological semilattice $E$ is $\mathscr{H}$-complete if and only if the following conditions hold:
\begin{itemize}
    \item[$(i)$] $E$ is complete;
    \item[$(ii)$] $x=\sup A$ for $A={\downarrow}A$
    implies $x\in\operatorname{cl}_EA$; and
    \item[$(iii)$] $x=\inf B$ for $B={\uparrow}B$
    implies $x\in\operatorname{cl}_EB$.
\end{itemize}
\end{theorem}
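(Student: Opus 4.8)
The plan is to prove the two implications of the equivalence separately. For the sufficiency it is enough to show that if $E$ is a \emph{dense} subsemilattice of a Hausdorff topological semilattice $S$ then $E=S$: a general isomorphic topological embedding $E\hookrightarrow S$ reduces to this case by replacing $S$ with $\operatorname{cl}_S(E)$, which is again a Hausdorff topological subsemilattice because multiplication is continuous. Two preliminary remarks. First, $S$ is itself a chain: the set $\{(a,b)\in S\times S:ab=a\ \text{or}\ ab=b\}$ is a union of two equalizers of continuous maps into the Hausdorff space $S$, hence closed, and it contains the dense subset $E\times E$. Second, the natural order of $S$ equals $\{(a,b):ab=a\}$ and is therefore a closed subset of $S\times S$, and it restricts on $E$ to the natural order of $E$.

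Now fix $s\in S$ and suppose $s\notin E$; I will derive a contradiction. Choose a net $(e_\gamma)$ in $E$ converging to $s$. Since $S$ is a chain, the index set is the union of the three sets $\{\gamma:e_\gamma<s\}$, $\{\gamma:e_\gamma=s\}$, $\{\gamma:e_\gamma>s\}$; the middle one is not cofinal (otherwise $s\in E$), and in a directed set a union of finitely many non-cofinal sets is non-cofinal, so one of the outer sets is cofinal. Passing to the corresponding subnet, assume $e_\gamma<s$ for all $\gamma$ (the case $e_\gamma>s$ is symmetric). Set $A=\{e\in E:e\leqslant s\}$. Then $A={\downarrow}A$ in $E$, every $e_\gamma$ belongs to $A$, and since the order of $S$ is closed, $s$ is the least upper bound of $A$ in $S$. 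By $(i)$ the supremum $a:=\sup_E A$ exists in $E$; as $a$ is an upper bound of $A$ also in $S$, we get $s\leqslant a$. By $(ii)$, applied to the down-set $A$ with $\sup_E A=a$, we have $a\in\operatorname{cl}_E A$, so some net in $A$ converges to $a$; all its members are $\leqslant s$, so closedness of the order gives $a\leqslant s$. Hence $s=a\in E$, a contradiction. The symmetric case uses $B=\{e\in E:e\geqslant s\}$ together with $(i)$ and $(iii)$. Therefore $E=S$, which proves the sufficiency.

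For the necessity I argue by contraposition: assuming that one of $(i)$–$(iii)$ fails, I produce a Hausdorff topological semilattice $\widetilde E=E\cup\{z\}$, obtained by inserting a single new point $z$ into the chain $E$, in which $E$ is dense but not closed; this contradicts the $\mathscr{H}$-completeness of $E$. The position of $z$ depends on the failing condition. If $(i)$ fails, $E$ has a subset without supremum or without infimum; in the infimum case, writing $B$ for the up-set it generates and $L$ for the set of lower bounds of $B$, one has $E=L\sqcup B$ with $L$ having no greatest and $B$ no least element, and $z$ is inserted between $L$ and $B$ (the cases of a missing top or bottom of $E$ are analogous). If $(ii)$ fails, with $x=\sup_E A$, $A={\downarrow}A$ and $x\notin\operatorname{cl}_E A$, then $E=A\sqcup{\uparrow}x$ and $z$ is inserted immediately below $x$, so that in $\widetilde E$ it becomes $\sup A$ and $A$ accumulates at it; condition $(iii)$ is treated dually. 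The topology on $\widetilde E$ keeps $E$ as a subspace, and a basic neighbourhood of $z$ has the form $\{z\}\cup O$ where $O$ is an open subset of $E$ containing a suitable tail of the relevant one-sided piece of $E$; such tails, being of the form $\{e:e>a\}=E\setminus{\downarrow}a$ or $\{e:e<b\}=E\setminus{\uparrow}b$ (intersected, in the cases $(ii)$ and $(iii)$, with the complement of the closure of a neighbourhood of $x$ in order to keep away from $x$), are open in $E$ precisely because the principal ideals ${\downarrow}a$ and filters ${\uparrow}b$ are closed, being equalizers of continuous maps into $E$.

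The routine but genuinely delicate point — and the step I expect to be the main obstacle — is to verify, for each of the three constructions, that $\widetilde E$ is Hausdorff and that the extended meet operation is continuous at the pairs involving $z$. The difficulty is that the topology on $E$ is an arbitrary Hausdorff semilattice topology, not necessarily the order topology, so one cannot work with order intervals directly; the substitutes are the closedness of the sets ${\downarrow}a$ and ${\uparrow}b$, the hypothesis $x\notin\operatorname{cl}_E A$ (respectively $x\notin\operatorname{cl}_E B$), which is exactly what allows $z$ to be separated from $x$, and the elementary fact that an increasing net which is cofinal in a set $A$ can cluster only at an upper bound of $A$ — so, $A$ having no greatest element, at no point of $A$ — which separates $z$ from the points lying on its own side of the gap. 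Once these verifications are carried out, the point $z$ lies in $\operatorname{cl}_{\widetilde E}(E)\setminus E$, so $E$ is not $\mathscr{H}$-complete; this establishes the contrapositive and hence the theorem.
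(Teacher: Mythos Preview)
The paper does not contain a proof of this theorem: Theorem~\ref{theorem-1} is quoted verbatim from \cite{GutikRepovs2008} and is used as a black box in the proofs of Theorems~\ref{theorem-2}, \ref{theorem-3}, and \ref{theorem-11}. There is therefore nothing in the present paper to compare your argument against.

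That said, your sufficiency argument is sound: the reduction to a dense embedding, the fact that the closure of a chain is a chain with closed order, and the squeeze $s\leqslant a$ (from $s=\sup_S A$) together with $a\leqslant s$ (from $a\in\operatorname{cl}_E A$ and closedness of the order) is exactly the right mechanism. One small clarification you might add is why $s$ really is $\sup_S A$: you use the converging subnet $(e_\gamma)\subset A$ and closedness of $\leqslant$ to rule out a smaller upper bound, and this deserves one explicit sentence.

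For the necessity you correctly identify the genuine difficulty, and your outline is the standard one, but as written it is only a plan: the Hausdorff and continuity verifications for the one-point extension $\widetilde E$ are not carried out, and your description of the neighbourhood base at $z$ in cases $(ii)$ and $(iii)$ is imprecise (``the complement of the closure of a neighbourhood of $x$'' is not quite what is needed; rather, one uses that $A=E\setminus{\uparrow}x$ is open and that $x\notin\operatorname{cl}_E A$ gives an open $V\ni x$ with $V\cap A=\varnothing$, so that basic neighbourhoods of $z$ can be taken inside $A\cup\{z\}$). If you want a self-contained proof you will need to write these checks out; otherwise, citing \cite[Theorem~2]{GutikRepovs2008} as the present paper does is the appropriate course.
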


Also, in \cite{GutikRepovs2008} Gutik and Repov\v{s} proved that each linearly ordered $\mathscr{H}$-complete topological semilattice is $\mathscr{A\!H}$-complete and showed that every linearly ordered semilattice is a dense subsemilattice of an $\mathscr{H}$-complete topological semilattice. In \cite{ChuchmanGutik2007} Chuchman and Gutik proved that any $\mathscr{H}$-complete locally compact topological semilattice and any $\mathscr{H}$-complete topological weakly U-semilattice contain minimal idempotents.

In \cite[Question~17]{Stepp1975} Stepp asked the following question: \emph{Is each $\mathscr{H}$-complete topological semilattice $\mathscr{A\!H}$-complete?} In the present paper we answer this Stepp's question in the negative by constructing an example of an $\mathscr{H}$-complete topological semilattice which is not  $\mathscr{A\!H}$-complete. Also we construct an $\mathscr{H}$-complete topological semilattice of arbitrary infinite cardinality $\lambda$ which has $2^\lambda$ many open-and-closed continuous homomorphic images which are not $\mathscr{H}$-complete topological semilattices.

Let $\mathbb{N}$ denote the set of positive integers. For
each free filter $\mathscr{F}$ on $\mathbb{N}$ consider the topological space $\mathbb{N}_{\mathscr{F}}=\mathbb{N}\cup \{\mathscr{F}\}$ in which all points $x\in\mathbb{N}$ are isolated while the sets $F\cup\{\mathscr{F}\}$, $F\in\mathscr{F}$, form a neighbourhood base at the unique non-isolated point $\mathscr{F}$.

The semilattice operation $\min$ (resp., $\max$) of $\mathbb{N}$ extends to a continuous semilattice operation $\max$ $\min$ (resp., $\max$) on $\mathbb{N}_{\mathscr{F}}$ such that $\min\{n,\mathscr{F}\}=\min\{\mathscr{F},n\}=n$ and $\min\{\mathscr{F},\mathscr{F}\}=\mathscr{F}$ (resp., $\max\{n,\mathscr{F}\}=\max\{\mathscr{F},n\}=\mathscr{F}=\max\{\mathscr{F},\mathscr{F}\}$) for all $n\in\mathbb{N}$. By $\mathbb{N}_{\mathscr{F},\min}$ (resp., $\mathbb{N}_{\mathscr{F},\max}$) we shall denote the topological space $\mathbb{N}_{\mathscr{F}}$ with the semilattice operation $\min$ (resp., $\max$). Simple verifications show that $\mathbb{N}_{\mathscr{F},\min}$ and $\mathbb{N}_{\mathscr{F},\max}$ are topological semilattices.

\begin{theorem}\label{theorem-2}
\begin{enumerate}
\item[$(i)$] For each free filter $\mathscr{F}$ on $\mathbb{N}$ the topological semilattices $\mathbb{N}_{\mathscr{F},\min}$ and $\mathbb{N}_{\mathscr{F},\max}$ are $\mathscr{A\!H}$-complete.
\item[$(ii)$] Each $\mathscr{H}$-completion of the discrete semilattice $(\mathbb{N},\min)$ (resp., $(\mathbb{N},\max)$) is topologically isomorphic to the topological semilattice $\mathbb{N}_{\mathscr{F},\min}$ (resp., $\mathbb{N}_{\mathscr{F},\max}$) for some free filter $\mathscr{F}$ on $\mathbb{N}$.
\item[$(iii)$] The topological semilattice $(\mathbb{N},\min)$ (resp., $(\mathbb{N},\max)$) has no universal $\mathscr{A\!H}$-completion.
\end{enumerate}
\end{theorem}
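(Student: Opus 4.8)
The three statements will be established in turn, with part~$(ii)$ carrying the technical weight. For $(i)$ I would first observe that, for the natural partial orders, $\mathbb{N}_{\mathscr{F},\min}$ is the chain $1<2<3<\cdots<\mathscr{F}$ and $\mathbb{N}_{\mathscr{F},\max}$ is the chain $\mathscr{F}<\cdots<3<2<1$, so both are linearly ordered topological semilattices and Theorem~\ref{theorem-1} applies. Verifying its three conditions is then routine: completeness holds because in each of these chains every nonempty subset has an infimum and a supremum (one of which is the non-isolated point exactly when the subset is cofinal in $\mathbb{N}$); and conditions $(ii)$ and $(iii)$ come down to inspecting the down-sets and up-sets of the chain, each of which either already contains its supremum, resp. infimum, or is an infinite subset of $\mathbb{N}$ whose supremum, resp. infimum, is the non-isolated point $\mathscr{F}$, and $\mathscr{F}$ lies in $\operatorname{cl}_{E}(\mathbb{N})$ by the very construction of $\mathbb{N}_{\mathscr{F}}$. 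Thus $\mathbb{N}_{\mathscr{F},\min}$ and $\mathbb{N}_{\mathscr{F},\max}$ are $\mathscr{H}$-complete, and being linearly ordered they are $\mathscr{A\!H}$-complete by the theorem of Gutik and Repov\v{s} quoted above.

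For $(ii)$, let $\widetilde E$ be an $\mathscr{H}$-completion of $(\mathbb{N},\min)$. Since the closure of a commutative band in a Hausdorff topological semigroup is again a commutative band, $\widetilde E=\operatorname{cl}_{\widetilde E}(\mathbb{N})$ is a topological semilattice, and by the definition of a completion $\mathbb{N}$ is discrete as a subspace of $\widetilde E$. Put $K=\widetilde E\setminus\mathbb{N}$; as the discrete semilattice $(\mathbb{N},\min)$ has an infinite maximal chain, it is not $\mathscr{H}$-complete by Stepp's characterisation quoted above, so $K\neq\varnothing$. The key steps are: (a) every net in $\mathbb{N}$ converging to a point of $K$ tends to infinity, since otherwise a cofinal subnet is constant and Hausdorffness forces its limit into $\mathbb{N}$; (b) hence $pn=n$ for all $p\in K$, $n\in\mathbb{N}$ (and $pn=p$ in the $\max$ case), because along such a net $m_\alpha n=\min\{m_\alpha,n\}$ is eventually equal to $n$; and (c) $K$ is a singleton: given $p,q\in K$ and a net $m_\alpha\to p$ in $\mathbb{N}$, step~(b) applied to $q$ gives $m_\alpha q=qm_\alpha=m_\alpha$, so $m_\alpha q\to p$, while also $m_\alpha q\to pq$, whence $pq=p$; symmetrically $qp=q$, and commutativity yields $p=q$. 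Writing $K=\{x_0\}$, each $n\in\mathbb{N}$ is isolated in $\widetilde E=\mathbb{N}\cup\{x_0\}$, the traces on $\mathbb{N}$ of the open neighbourhoods of $x_0$ form a filter $\mathscr{F}$, and $\mathscr{F}$ is free since, by Hausdorffness, no $n$ lies in every neighbourhood of $x_0$. The bijection fixing $\mathbb{N}$ pointwise and sending $x_0$ to $\mathscr{F}$ is then a homeomorphism onto $\mathbb{N}_{\mathscr{F}}$ which, by (b) and $x_0^2=x_0$, preserves the operation, so $\widetilde E$ is topologically isomorphic to $\mathbb{N}_{\mathscr{F},\min}$; the $\max$ case is entirely parallel. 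I expect this part to be the main obstacle: steps (a)--(c) require careful handling of the net/subnet arguments together with Hausdorff separation and the semilattice identities, and one must be precise about the subspace topology carried by $\mathbb{N}$.

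For $(iii)$, note that an $\mathscr{A\!H}$-completion is in particular an $\mathscr{H}$-completion, so by $(ii)$ every $\mathscr{A\!H}$-completion of $(\mathbb{N},\min)$ is topologically isomorphic to some $\mathbb{N}_{\mathscr{F},\min}$, and by $(i)$ every such semilattice is indeed $\mathscr{A\!H}$-complete. It then suffices to show that no $\mathbb{N}_{\mathscr{F},\min}$ is universal. Fix a free filter $\mathscr{F}$ and let $\mathscr{G}$ be an arbitrary free filter on $\mathbb{N}$: the inclusion $(\mathbb{N},\min)\hookrightarrow\mathbb{N}_{\mathscr{G},\min}$ is a continuous homomorphism into an $\mathscr{A\!H}$-complete semilattice, and any continuous extension $\widetilde h\colon\mathbb{N}_{\mathscr{F},\min}\to\mathbb{N}_{\mathscr{G},\min}$ restricts to the identity on $\mathbb{N}$; by continuity the filter $\mathscr{F}$ converges in $\mathbb{N}_{\mathscr{G}}$ to $\widetilde h(\mathscr{F})$, which is not an isolated point because $\mathscr{F}$ is free, hence equals $\mathscr{G}$, and this convergence means exactly $\mathscr{G}\subseteq\mathscr{F}$. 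So universality would force $\mathscr{F}$ to contain every free filter on $\mathbb{N}$, which is impossible — for instance, a proper filter cannot contain two distinct free ultrafilters. Hence $(\mathbb{N},\min)$ has no universal $\mathscr{A\!H}$-completion, and the same argument applies verbatim to $(\mathbb{N},\max)$.
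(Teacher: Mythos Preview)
Your proof is correct. Parts~$(i)$ and~$(iii)$ follow the same strategy as the paper: for~$(i)$ both you and the authors verify Theorem~\ref{theorem-1} and then invoke the Gutik--Repov\v{s} result that linearly ordered $\mathscr{H}$-complete semilattices are $\mathscr{A\!H}$-complete; for~$(iii)$ both arguments amount to exhibiting, for any given $\mathscr{F}$, a free filter $\mathscr{G}\not\subseteq\mathscr{F}$ so that the identity on $\mathbb{N}$ admits no continuous extension $\mathbb{N}_{\mathscr{F}}\to\mathbb{N}_{\mathscr{G}}$ (you fill in the convergence-of-filters detail that the paper leaves implicit).

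Part~$(ii)$ is where your route genuinely differs. The paper invokes an external structural fact---the closure of a linearly ordered subsemilattice in a Hausdorff topological semigroup is again a linearly ordered topological semilattice (\cite[Corollary~19]{GutikPavlyk2003}, \cite[Lemma~1]{GutikRepovs2008})---and from this reads off at once that the completion is a chain with $S\setminus\mathbb{N}$ a singleton. You instead argue from first principles with nets: (a) any net in $\mathbb{N}$ converging to a point of $K=\widetilde E\setminus\mathbb{N}$ must tend to infinity, (b) hence $pn=n$ for $p\in K$, $n\in\mathbb{N}$, and (c) for $p,q\in K$ the relations $pq=p$ and $qp=q$ follow by passing to limits, forcing $p=q$. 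Your approach is longer but entirely self-contained, avoiding the cited lemma; the paper's is shorter but leans on that prior result. Both yield the same identification of the completion with $\mathbb{N}_{\mathscr{F},\min}$ via the trace filter at the unique non-isolated point.
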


\begin{proof}[Proof]
$(i)$ By Theorem~\ref{theorem-1}, we have that the topological semilattices $\mathbb{N}_{\mathscr{F},\min}$ and $\mathbb{N}_{\mathscr{F},\max}$ are $\mathscr{H}$-complete. Since $\mathbb{N}_{\mathscr{F},\min}$ and $\mathbb{N}_{\mathscr{F},\max}$ are linearly ordered semilattices, Theorem~3 of \cite{GutikRepovs2008} implies that the topological semilattices $\mathbb{N}_{\mathscr{F},\min}$ and $\mathbb{N}_{\mathscr{F},\max}$ are $\mathscr{A\!H}$-complete.

\medskip
$(ii)$ We shall prove the statement for the semilattice $(\mathbb{N},\min)$. In the case of $(\mathbb{N},\max)$ the proof is similar.

Let $S$ be an $\mathcal H$-complete topological semilattice containing $(\mathbb N,\min)$ as a dense subsemilattice.
Since the closure of a linearly ordered subsemilattice in a Hausdorff topological semigroup is a linearly ordered topological semilattice (see \cite[Corollary~19]{GutikPavlyk2003} and \cite[Lemma~1]{GutikRepovs2008}), we conclude that $S$ is linearly ordered and $S\setminus \mathbb N$ is a singleton $\{a\}$.  Then since $(\mathbb{N},\min)$ is a dense subsemilattice of $S$, the continuity of the semilattice operation in $S$ implies that $a\cdot a=a$ and $a\cdot n=n\cdot a=n$ for any $n\in\mathbb{N}$. Let $\mathscr{B}(a)$ be the filter of neighborhoods of the point $a$ in $S$. This filter induces the free filter $\mathscr{F}=\{F\subset \mathbb N:F\cup\{a\}\in \mathcal B(a)\}$. Then we can identify the topological semilattice $S$ with $\mathbb{N}_{\mathscr{F},\min}$ by the topological isomorphism $f\colon S\rightarrow \mathbb{N}_{\mathscr{F},\min}$ such that $f(a)=\mathscr{F}$ and $f(n)=n$ for every $n\in\mathbb{N}$.

\medskip
$(iii)$ Suppose the contrary: there exists a universal $\mathscr{A\!H}$-completion $S$ of the discrete semilattice $(\mathbb{N},\max)$. Then by statement $(ii)$, the semilattice $S$ can be identified with the semilattice $\mathbb{N}_{\mathscr{F},\max}$ for some free filter $\mathscr{F}$ on $\mathbb N$. Let $\mathscr{F}^{\prime}$ be any free filter on $\mathbb{N}$ such that $\mathscr{F}^{\prime}\not\subset\mathscr{F}$. Then the identity embedding $\operatorname{id}_{\mathbb{N}} \colon(\mathbb{N},\max)\rightarrow\mathbb{N}_{\mathscr{F}^{\prime},\max}$ cannot be extend to a continuous homomorphism $h\colon\mathbb{N}_{\mathscr{F},\max} \rightarrow \mathbb{N}_{\mathscr{F}^{\prime},\max}$, witnessing that the $\mathscr{AH}$-completion $S$ of $(\mathbb N,\min)$ is not universal.
\end{proof}

Later on, by $E_2=\{0,1\}$ we denote the discrete topological semilattice with the semilattice operation $\min$.

\begin{theorem}\label{theorem-3}
Let $\mathscr{F}$ be a free filter on $\mathbb{N}$ and $F\in\mathscr{F}$ be a set with infinite complement $\mathbb{N}\setminus F$. Then the following statements hold:
\begin{enumerate}
\item[$(i)$] the closed subsemilattice $E=\left(\mathbb{N}_{\mathscr{F},\min}\times\{0\}\right)\cup \left((\mathbb{N}\setminus F)\times\{1\}\right)$ of the direct product $\mathbb{N}_{\mathscr{F},\min}\times E_2$ is $\mathscr{H}$-complete;
\item[$(ii)$] the subset $I=\mathbb{N}_{\mathscr{F},\min}\times\{0\}$ is an open-and-closed ideal in $E$, and the quotient semilattice $E/I$ with the quotient topology is discrete and not $\mathscr{H}$-complete;
\item[$(iii)$] the semilattice $E$ is not $\mathscr{A\!H}$-complete.
\end{enumerate}
\end{theorem}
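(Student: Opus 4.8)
The deduction of $(iii)$ is the short part: the substantive work sits in $(ii)$, which I assume below. The plan is to observe that $\mathscr{A\!H}$-completeness is inherited by surjective continuous homomorphic images, and then apply this to the quotient map $\pi\colon E\to E/I$ from part $(ii)$. Precisely, suppose $X$ is an $\mathscr{A\!H}$-complete topological semigroup and $g\colon X\to Q$ is a surjective continuous homomorphism onto a Hausdorff topological semigroup $Q$. Then for any continuous homomorphism $h\colon Q\to Y$ into a Hausdorff topological semigroup $Y$, the composition $h\circ g\colon X\to Y$ is a continuous homomorphism, so its image $h(Q)=(h\circ g)(X)$ is closed in $Y$; hence $Q$ is $\mathscr{A\!H}$-complete, and in particular $\mathscr{H}$-complete.

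I would then apply this with $X=E$, $Q=E/I$ and $g=\pi$ the quotient map. By $(ii)$ the set $I$ is an open-and-closed ideal, so $\pi$ is a surjective continuous homomorphism of topological semilattices and $E/I$, carrying the discrete quotient topology, is Hausdorff; thus if $E$ were $\mathscr{A\!H}$-complete, then $E/I$ would be $\mathscr{H}$-complete. But $(ii)$ states that $E/I$ is not $\mathscr{H}$-complete: it is the infinite chain obtained by adjoining the bottom class $[I]$ to the chain $(\mathbb{N}\setminus F)\times\{1\}$, so it contains an infinite maximal chain and therefore fails Stepp's criterion from \cite{Stepp1975}; equivalently, being isomorphic to $(\mathbb{N},\min)$, it embeds densely but not closedly into $\mathbb{N}_{\mathscr{G},\min}$ for, say, the cofinite filter $\mathscr{G}$ on $\mathbb{N}$. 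This contradiction proves that $E$ is not $\mathscr{A\!H}$-complete.

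Alternatively one can bypass the inheritance observation and argue directly: by $(ii)$ there is an isomorphic topological embedding $j\colon E/I\to Z$ into a Hausdorff topological semigroup $Z$ with $j(E/I)$ not closed in $Z$, and then $j\circ\pi\colon E\to Z$ is a continuous homomorphism whose image $j(E/I)$ is not closed, witnessing the failure of $\mathscr{A\!H}$-completeness. Either way, the only point needing care — there is no real obstacle once $(ii)$ is available — is the bookkeeping that $\pi$ is a genuine morphism of the category $\mathscr{A\!H}$: it is continuous because $I$ is open in $E$ and the points of $(\mathbb{N}\setminus F)\times\{1\}$ are isolated in $E$, and it is a semilattice homomorphism because $I$ is an ideal. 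Finally, combining $(iii)$ with $(i)$ exhibits a topological semilattice that is $\mathscr{H}$-complete but not $\mathscr{A\!H}$-complete, which is the promised negative answer to Question~17 of \cite{Stepp1975}.
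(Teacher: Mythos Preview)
Your proposal addresses only part $(iii)$, explicitly taking $(ii)$ for granted and not attempting $(i)$ at all. For $(iii)$ itself your argument is correct and coincides with the paper's, which is literally the one-line remark ``Statement $(iii)$ follows from statement $(ii)$''; you have merely unpacked the easy observation that $\mathscr{A\!H}$-completeness passes to surjective continuous homomorphic images (or, equivalently, that composing the quotient map with a non-closed embedding of $E/I$ witnesses the failure). Your sketch of the structure of $E/I$ as a chain isomorphic to $(\mathbb{N},\min)$ also matches the paper's treatment of $(ii)$.

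The genuine gap is $(i)$, which is where the substance of the theorem lies and which you do not touch. The paper proves it by contradiction: assuming $E$ is a dense proper subsemilattice of some topological semilattice $S$, one first uses $\mathscr{H}$-completeness of $\mathbb{N}_{\mathscr{F},\min}\times\{0\}$ to see that any $a\in S\setminus E$ has a neighbourhood meeting $E$ only in $(\mathbb{N}\setminus F)\times\{1\}$; then one shows $a\cdot x=x$ for every $x\in E$, in particular $a\cdot(\mathscr{F},0)=(\mathscr{F},0)$; finally, continuity of multiplication at $(a,(\mathscr{F},0))$ forces a product $(n,1)\cdot(m,0)=(n,0)$ with $n\in\mathbb{N}\setminus F$ to land in $(F\cup\{\mathscr{F}\})\times\{0\}$, a contradiction. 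Without this argument your closing sentence --- that combining $(iii)$ with $(i)$ answers Stepp's Question~17 --- is unsupported by anything in the proposal.
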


\begin{proof}[Proof]
$(i)$ The definition of the topological semilattice $\mathbb{N}_{\mathscr{F},\min}\times E_2$ implies that $E$ is a closed subsemilattice of $\mathbb{N}_{\mathscr{F},\min}\times E_2$.

Suppose the contrary: the topological semilattice $E$ is not $\mathscr{H}$-complete. Since the closure of a subsemilattice in a topological semigroup is a semilattice (see Corollary~19 of \cite{GutikPavlyk2003}), we conclude that there exists a topological semilattice $S$ which contains $E$ as a dense subsemilattice and $S\setminus E\neq\varnothing$. We fix an arbitrary $a\in S\setminus E$. Then for every open neighbourhood $U(a)$ of the point $a$ in $S$ we have that the set $U(a)\cap E$ is infinite. By Theorem~\ref{theorem-2}, the subspace $\mathbb{N}_{\mathscr{F},\min}\times\{0\}$ of $E$ with the induced semilattice operation from $E$ is an $\mathscr{H}$-complete topological semilattice. Therefore, there exists an open neighbourhood $U(a)$ of the point $a$ in $S$ such that $U(a)\cap E\subseteq (\mathbb{N}\setminus F)\times\{1\}$ and hence the set $U(a)\cap((\mathbb{N}\setminus F)\times\{1\})$ is infinite.

Next we shall show that $a\cdot x=x$ for any $x\in E\setminus\{(\mathscr{F},0)\}$. Since the set $U(x)\cap((\mathbb{N}\setminus F)\times\{1\})$ is infinite, the continuity of the semilattice operation in $E$ implies that $a\cdot x=x$ for any $x\in(\mathbb{N}\setminus F)\times\{1\}$. Now fix any point $y\in\mathbb{N}\times \{0\}\subset E$. By the definition of the semilattice operation on $E$, we can find a point $x_y\in(\mathbb{N}\setminus F)\times\{1\}$ with $x_y\cdot y=y$ and conclude that
\begin{equation*}
a\cdot y=a\cdot(x_y\cdot y)=(a\cdot x_y)\cdot y=x_y\cdot y=y.
\end{equation*}
Since $(\mathscr{F},0)$ is a cluster point of the set $\mathbb N\times\{0\}$, the continuity of the semilattice operation implies that $a\cdot(\mathscr{F},0)=(\mathscr{F},0)$.

Since $W(\mathscr F,0)=(F\cup\{\mathscr{F}\})\times\{0\}$ is a neighborhood of the point $(\mathscr F,0)=a\cdot(\mathscr{F},0)$, the continuity of the semilattice operation yields the existence of neighborhoods $U(a)$ and $V(\mathscr{F},0)$ of the points $a$ and $(\mathscr{F},0)$ in $S$ such that $U(a)\cdot V(\mathscr{F},0)\subset W(\mathscr F,0)$.
Now choose any point $(n,1)\in U(a)\cap \big((\mathbb N\setminus F)\times\{1\}\big)$ and find a point $(m,0)\in V(\mathscr{F},0)$ such that $m\ge n$. Then $$(n,0)=(n,1)\cdot (m,1)\in U(a)\cdot U(\mathscr F,0)\subset W(\mathscr F,0)=(F\cup\{\mathscr F\})\times\{0\},$$ which contradicts the choice of $n\in\mathbb N\setminus F$.
\medskip

$(ii)$ The definition of the semilattice $E$ implies that $I=\mathbb{N}_{\mathscr{F},\min}\times\{0\}$ is an open-and-closed ideal in $E$. Then the quotient semilattice $E/I$ (endowed with the quotient topology) is a discrete topological semilattice, topologically isomorphic to the discrete semilattice $(\mathbb{N},\min)$. By Theorem~\ref{theorem-1}, the semilattice $E/I$ is not $\mathscr{H}$-complete.

\medskip
Statement $(iii)$ follows from statement $(ii)$.
\end{proof}

\begin{corollary}\label{corollary-4}
For a free filter $\mathscr{F}$ on $\mathbb{N}$, each closed subsemilattice of the semilattice $\mathbb{N}_{\mathscr{F},\min}\times E_2$ is $\mathscr{A\!H}$-complete if and only if $\mathscr{F}$ is the filter of cofinite sets on $\mathbb{N}$.
\end{corollary}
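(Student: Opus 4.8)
The plan is to prove both implications quickly: the ``if'' direction from a compactness observation, and the ``only if'' direction (in contrapositive form) directly from Theorem~\ref{theorem-3}.

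For the ``if'' part, I would first note that when $\mathscr F$ is the filter of cofinite subsets of $\mathbb N$, the space $\mathbb N_{\mathscr F}$ is precisely the one-point compactification of the discrete space $\mathbb N$, since a basic neighbourhood of $\mathscr F$ is $F\cup\{\mathscr F\}$ with $\mathbb N\setminus F$ finite. Hence $\mathbb N_{\mathscr F,\min}$ is compact, and so is the product $\mathbb N_{\mathscr F,\min}\times E_2$ because $E_2$ is finite. Consequently every closed subsemilattice $T$ of $\mathbb N_{\mathscr F,\min}\times E_2$ is a compact Hausdorff topological semilattice. It then remains to use the elementary fact that every compact Hausdorff topological semigroup is $\mathscr{A\!H}$-complete: if $f\colon T\to Y$ is a continuous homomorphism into a Hausdorff topological semigroup $Y$, then $f(T)$ is compact and therefore closed in $Y$. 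Thus $T$ is $\mathscr{A\!H}$-complete.

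For the ``only if'' part I would argue the contrapositive. Assume $\mathscr F$ is a free filter which is not the filter of cofinite sets, and claim that then some member of $\mathscr F$ has infinite complement. Indeed, suppose to the contrary that every $F\in\mathscr F$ is cofinite; then $\mathscr F$ is contained in the Fr\'echet filter, and I claim it also contains it. Given an arbitrary cofinite set $C$ with $\mathbb N\setminus C=\{a_1,\dots,a_k\}$, freeness of $\mathscr F$ provides, for each $i$, a set $F_i\in\mathscr F$ with $a_i\notin F_i$; then $F_1\cap\dots\cap F_k\in\mathscr F$ is disjoint from $\{a_1,\dots,a_k\}$, so $F_1\cap\dots\cap F_k\subseteq C$ and hence $C\in\mathscr F$ by upward closedness. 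This would force $\mathscr F$ to be the Fr\'echet filter, contradicting our assumption. So we may fix $F\in\mathscr F$ with $\mathbb N\setminus F$ infinite, and Theorem~\ref{theorem-3}$(iii)$ exhibits the closed subsemilattice $E=\left(\mathbb N_{\mathscr F,\min}\times\{0\}\right)\cup\left((\mathbb N\setminus F)\times\{1\}\right)$ of $\mathbb N_{\mathscr F,\min}\times E_2$ that fails to be $\mathscr{A\!H}$-complete. This completes the contrapositive, and hence the proof.

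As for difficulty, there is essentially no serious obstacle here. The only point needing a genuine (if short) argument is the claim that a free filter all of whose members are cofinite must coincide with the whole Fr\'echet filter; the rest is either the standard observation that compact topological semigroups are $\mathscr{A\!H}$-complete, or a direct appeal to Theorem~\ref{theorem-3}.
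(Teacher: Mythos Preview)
Your proposal is correct and follows essentially the same approach as the paper: the ``if'' direction via compactness of $\mathbb{N}_{\mathscr{F},\min}\times E_2$ when $\mathscr{F}$ is the Fr\'echet filter, and the ``only if'' direction by exhibiting the semilattice $E$ from Theorem~\ref{theorem-3} once one has an $F\in\mathscr{F}$ with infinite complement. The only difference is that you spell out in full the elementary lemma that a free filter whose every member is cofinite must coincide with the Fr\'echet filter, whereas the paper leaves this implicit.
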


\begin{proof}[Proof]
$(\Leftarrow)$ If $\mathscr{F}$ is the filter of cofinite sets on $\mathbb{N}$, then the space $\mathbb{N}_{\mathscr{F},\min}\times E_2$ is compact. Then each closed subset of $\mathbb{N}_{\mathscr{F},\min}\times E_2$ is compact and hence each closed subsemilattice of the semilattice $\mathbb{N}_{\mathscr{F},\min}\times E_2$ is $\mathscr{A\!H}$-complete.

$(\Rightarrow)$ If $\mathscr{F}$ is a free filter on $\mathbb{N}$ containing a set $F\subseteq\mathbb{N}$ with the infinite complement $\mathbb{N}\setminus F$, then $E=\big(\mathbb{N}_{\mathscr{F},\min}\times \{0\}\big)\cup\big((\mathbb N\setminus F)\times\{1\}\big)$ is a closed subsemilattice of the topological semilattice $\mathbb{N}_{\mathscr{F},\min}\times E_2$ and Theorem~\ref{theorem-3} implies that $E$ is not $\mathscr{A\!H}$-complete.
\end{proof}

The proof of the following theorem is similar to the proof of Theorem~\ref{theorem-3} with some simple modifications.

\begin{theorem}\label{theorem-5}
Let $\mathscr{F}$ be a free filter on $\mathbb{N}$ and $F\in\mathscr{F}$ be a set with infinite complement $\mathbb{N}\setminus F$. Then the following assertions hold:
\begin{enumerate}
\item[$(i)$] the closed subsemilattice $E=\left(\mathbb{N}_{\mathscr{F},\max}\times\{0\}\right)\cup \left((\mathbb{N}\setminus F)\times\{1\}\right)$ of the direct product $\mathbb{N}_{\mathscr{F},\max}\times E_2$ is $\mathscr{H}$-complete;
\item[$(ii)$] the subset $I=\mathbb{N}_{\mathscr{F},\max}\times\{0\}$ is an open-and-closed ideal in $E$, and the quotient semilattice $E/I$ with the quotient topology is discrete and not $\mathscr{H}$-complete;
\item[$(iii)$] the semilattice $E$ is not $\mathscr{A\!H}$-complete.
\end{enumerate}
\end{theorem}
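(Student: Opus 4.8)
I would mimic the proof of Theorem~\ref{theorem-3} essentially line by line, passing from $\min$ to $\max$ and keeping track of how the roles of the distinguished points change. The one structural point to notice first is that in $\mathbb{N}_{\mathscr{F},\max}$ the symbol $\mathscr{F}$ denotes the \emph{least} element, so $(\mathscr{F},0)$ is now the zero of $E$, i.e.\ $(\mathscr{F},0)\cdot x=(\mathscr{F},0)$ for all $x\in E$; this will actually make the argument a bit shorter than for Theorem~\ref{theorem-3}.

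For $(i)$ I would argue as follows. The definition of the product topology shows that $E$ is a closed subsemilattice of $\mathbb{N}_{\mathscr{F},\max}\times E_2$. Assume $E$ is not $\mathscr{H}$-complete; then there are a topological semilattice $S\supseteq E$ in which $E$ is dense and a point $a\in S\setminus E$, and every neighbourhood of $a$ meets $E$ in an infinite set. Since $I_0=\mathbb{N}_{\mathscr{F},\max}\times\{0\}$ is topologically isomorphic to $\mathbb{N}_{\mathscr{F},\max}$, it is $\mathscr{H}$-complete by Theorem~\ref{theorem-2}$(i)$, so $a\notin\operatorname{cl}_S(I_0)$ and I may fix an open neighbourhood $U_0(a)$ with $U_0(a)\cap E\subseteq(\mathbb{N}\setminus F)\times\{1\}$; this intersection is infinite, hence unbounded in $\mathbb{N}$. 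Because $(\mathscr{F},0)$ is a zero of $E$, the image of $E$ under the continuous translation $x\mapsto x\cdot(\mathscr{F},0)$ is the single closed point $(\mathscr{F},0)$, whence the image of $S=\operatorname{cl}_S(E)$ under this translation lies in $\{(\mathscr{F},0)\}$ and, in particular, $a\cdot(\mathscr{F},0)=(\mathscr{F},0)$. (This replaces the chain of intermediate identities $a\cdot x=x$ used in Theorem~\ref{theorem-3}.) Finally, $(F\cup\{\mathscr{F}\})\times\{0\}$ is open in $E$, so it equals $W\cap E$ for some open $W\ni(\mathscr{F},0)$ in $S$; using continuity of multiplication at $(a,(\mathscr{F},0))$ I would pick open neighbourhoods $U(a)\subseteq U_0(a)$ and $V(\mathscr{F},0)$ with $U(a)\cdot V(\mathscr{F},0)\subseteq W$. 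Here comes the one genuine modification of Theorem~\ref{theorem-3}, namely a switch of the order of the quantifiers: I first pick any point $(m,0)\in V(\mathscr{F},0)\cap E$, and only afterwards a point $(n,1)\in U(a)\cap E$ with $n\ge m$ (possible since $U(a)\cap E$ is infinite and contained in $(\mathbb{N}\setminus F)\times\{1\}$). Then $(n,1)\cdot(m,0)=(\max\{n,m\},0)=(n,0)\in W\cap E=(F\cup\{\mathscr{F}\})\times\{0\}$, which forces $n\in F$ and contradicts $n\in\mathbb{N}\setminus F$.

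For $(ii)$ I would proceed exactly as in Theorem~\ref{theorem-3}$(ii)$: $I=\mathbb{N}_{\mathscr{F},\max}\times\{0\}$ is an open-and-closed ideal of $E$, so the Rees quotient $E/I$ is discrete, and it is the chain $(\mathbb{N}\setminus F)\times\{1\}$ (under $\max$) with an adjoined zero $[I]$ — an infinite discrete linearly ordered semilattice. (In contrast with the $\min$-case, $E/I$ is now complete and not isomorphic to $(\mathbb{N},\max)$, but this is irrelevant.) By Theorem~\ref{theorem-1}, $E/I$ is not $\mathscr{H}$-complete: condition $(iii)$ fails for the up-set $B=E/I\setminus\{[I]\}$, whose infimum $[I]$ does not lie in $B=\operatorname{cl}_{E/I}(B)$; equivalently, $E/I$ is an infinite (maximal) chain. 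For $(iii)$ I would note, as in Theorem~\ref{theorem-3}, that a continuous homomorphic image of an $\mathscr{A\!H}$-complete topological semilattice is again $\mathscr{A\!H}$-complete, hence $\mathscr{H}$-complete, whereas $E/I$ is not; so $E$ is not $\mathscr{A\!H}$-complete.

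I do not expect any real obstacle here: the substance lies entirely in Theorems~\ref{theorem-1}, \ref{theorem-2}, and \ref{theorem-3}. The only two things to get right are (a) recognising that $(\mathscr{F},0)$ is the zero of $E$ for $\max$, which gives $a\cdot(\mathscr{F},0)=(\mathscr{F},0)$ at once and bypasses the intermediate computations of Theorem~\ref{theorem-3}, and (b) choosing the two witnessing points in the correct order at the very end, so that it is the point of $\mathbb{N}\setminus F$ that survives $\max$ rather than $\min$.
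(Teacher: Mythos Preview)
Your proposal is correct and follows precisely the route the paper indicates: the paper gives no separate proof but simply says the argument is similar to that of Theorem~\ref{theorem-3} ``with some simple modifications'', and the two modifications you single out (using that $(\mathscr{F},0)$ is now the zero of $E$ to obtain $a\cdot(\mathscr{F},0)=(\mathscr{F},0)$ at once, and swapping the order in which $(m,0)$ and $(n,1)$ are chosen so that the $\max$ lands in $\mathbb{N}\setminus F$) are exactly what is required. The only tiny imprecision is that when you pick $(m,0)\in V(\mathscr{F},0)\cap E$ you must take $m\in\mathbb{N}$ rather than $m=\mathscr{F}$; this is harmless since $V(\mathscr{F},0)\cap E$ contains a basic neighbourhood $(F'\cup\{\mathscr{F}\})\times\{0\}$ with $F'\in\mathscr{F}$ infinite.
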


The proof of the following corollary is similar to the proof of Corollary~\ref{corollary-4} and follows from Theorem~\ref{theorem-5}.

\begin{corollary}\label{corollary-6}
For a free filter $\mathscr{F}$ on $\mathbb{N}$, each closed subsemilattice of the semilattice $\mathbb{N}_{\mathscr{F},\max}\times E_2$ is $\mathscr{A\!H}$-complete if and only if $\mathscr{F}$ is the filter of cofinite sets on $\mathbb{N}$.
\end{corollary}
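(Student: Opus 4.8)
The plan is to follow the proof scheme of Corollary~\ref{corollary-4} with $\min$ replaced by $\max$, invoking Theorem~\ref{theorem-5} in place of Theorem~\ref{theorem-3}.

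For the sufficiency $(\Leftarrow)$ I would observe that when $\mathscr{F}$ is the filter of cofinite subsets of $\mathbb{N}$, the space $\mathbb{N}_{\mathscr{F},\max}$ is the one-point compactification of the countable discrete space $\mathbb{N}$, hence compact; since $E_2$ is finite, the product $\mathbb{N}_{\mathscr{F},\max}\times E_2$ is compact, and so is every closed subsemilattice $K$ of it. It then remains to recall the standard fact that a compact topological semigroup is $\mathscr{A\!H}$-complete: for any continuous homomorphism $f\colon K\to Y$ into a Hausdorff topological semigroup $Y$, the image $f(K)$ is compact, hence closed in $Y$. This direction presents no difficulty.

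For the necessity $(\Rightarrow)$ I would argue contrapositively. First note the elementary fact that every free filter on $\mathbb{N}$ contains all cofinite sets: for each $n$, freeness yields some $A\in\mathscr{F}$ with $n\notin A$, so $\mathbb{N}\setminus\{n\}\in\mathscr{F}$, and finite intersections of such sets again lie in $\mathscr{F}$. Hence if $\mathscr{F}$ is not the cofinite filter, it must strictly contain it, i.e., it contains some $F$ with $\mathbb{N}\setminus F$ infinite. Then Theorem~\ref{theorem-5}$(iii)$ applies to the closed subsemilattice $E=\big(\mathbb{N}_{\mathscr{F},\max}\times\{0\}\big)\cup\big((\mathbb{N}\setminus F)\times\{1\}\big)$ of $\mathbb{N}_{\mathscr{F},\max}\times E_2$, exhibiting a closed subsemilattice that fails to be $\mathscr{A\!H}$-complete. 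The whole argument is bookkeeping once Theorem~\ref{theorem-5} is available; the only substantive point is the (routine) equivalence between "$\mathscr{F}$ is the cofinite filter" and "every member of $\mathscr{F}$ is cofinite", so I expect no real obstacle.
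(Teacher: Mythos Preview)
Your proposal is correct and follows essentially the same approach as the paper, which explicitly states that the proof of Corollary~\ref{corollary-6} is similar to that of Corollary~\ref{corollary-4} with Theorem~\ref{theorem-5} in place of Theorem~\ref{theorem-3}. Your added remark that every free filter on $\mathbb{N}$ contains the cofinite filter is a helpful clarification that the paper leaves implicit.
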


We remark that Theorems~\ref{theorem-3} and~\ref{theorem-5} give negative answers on Question~17 from \cite{Stepp1975}.

Also, Theorems~\ref{theorem-3} and~\ref{theorem-5} imply the following corollary:

\begin{corollary}\label{corollary-7}
There exists a countable locally compact $\mathscr{H}$-complete topological semilattice $E$ with an open-and-closed ideal $I$ such that $I$ is an $\mathscr{A\!H}$-complete semilattice and the Rees quotient semigroup $E/I$ with the quotient topology is not $\mathscr{H}$-complete.
\end{corollary}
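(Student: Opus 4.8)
The plan is to make a careful concrete choice of the free filter $\mathscr{F}$ in Theorem~\ref{theorem-3} so that all the required properties follow at once. Fix an infinite set $A\subseteq\mathbb{N}$ whose complement $\mathbb{N}\setminus A$ is also infinite, and let $\mathscr{F}$ be the filter on $\mathbb{N}$ generated by the cofinite subsets of $A$, that is, $\mathscr{F}=\{B\subseteq\mathbb{N}:A\setminus B\ \text{is finite}\}$. It is immediate that $\mathscr{F}$ is a free filter and that $F:=A\in\mathscr{F}$ has infinite complement, so Theorem~\ref{theorem-3} applies with this $\mathscr{F}$ and $F$. Put $E=\left(\mathbb{N}_{\mathscr{F},\min}\times\{0\}\right)\cup\left((\mathbb{N}\setminus F)\times\{1\}\right)$ and $I=\mathbb{N}_{\mathscr{F},\min}\times\{0\}$. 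Then Theorem~\ref{theorem-3}$(i)$ gives that $E$ is $\mathscr{H}$-complete, Theorem~\ref{theorem-3}$(ii)$ gives that $I$ is an open-and-closed ideal of $E$ with discrete Rees quotient $E/I$ that is not $\mathscr{H}$-complete, and Theorem~\ref{theorem-2}$(i)$ gives that $I$, being topologically isomorphic to $\mathbb{N}_{\mathscr{F},\min}$, is $\mathscr{A\!H}$-complete. Since $E\subseteq\big(\mathbb{N}\times\{0,1\}\big)\cup\{(\mathscr{F},0)\}$ is countable, the only thing left to check is that $E$ is locally compact.

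For local compactness it suffices to exhibit a compact neighbourhood of the unique non-isolated point $(\mathscr{F},0)$ of $E$, every other point of $E$ being isolated. Here the choice of $\mathscr{F}$ is essential. The set $N:=\left(A\cup\{\mathscr{F}\}\right)\times\{0\}$ coincides with the trace $\left((F\cup\{\mathscr{F}\})\times\{0\}\right)\cap E$ of an open set of $\mathbb{N}_{\mathscr{F},\min}\times E_2$, hence it is a neighbourhood of $(\mathscr{F},0)$ in $E$. With the subspace topology $N$ is homeomorphic to the one-point compactification of a countable discrete space: each point of $A\times\{0\}$ is isolated in $N$, and the sets $\left((A\setminus K)\cup\{\mathscr{F}\}\right)\times\{0\}$ with $K\subseteq A$ finite form a neighbourhood base at $(\mathscr{F},0)$ in $N$ (because the members of $\mathscr{F}$ are precisely the sets whose intersection with $A$ is cofinite in $A$). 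Therefore $N$ is compact, so $E$ is locally compact. Equivalently, $\mathbb{N}_{\mathscr{F},\min}$ itself is locally compact for this $\mathscr{F}$, hence so is $\mathbb{N}_{\mathscr{F},\min}\times E_2$, and $E$ is a closed subspace of the latter.

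The one subtle point, which must not be skipped, is exactly this verification of local compactness: for an arbitrary free filter $\mathscr{F}$ the space $\mathbb{N}_{\mathscr{F},\min}$ need not be locally compact — for example when $\mathscr{F}$ is a free ultrafilter the point $\mathscr{F}$ has no compact neighbourhood — so one genuinely has to use that the chosen $\mathscr{F}$ is generated by the cofinite subsets of the single set $A$, which is what turns $A\cup\{\mathscr{F}\}$ into a compact neighbourhood of $\mathscr{F}$. Granting this, the semilattice $E$ witnesses the statement of the corollary.
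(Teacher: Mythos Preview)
Your proof is correct and follows the same route the paper intends: the paper merely states that Corollary~\ref{corollary-7} follows from Theorems~\ref{theorem-3} and~\ref{theorem-5}, and you have spelled this out via Theorem~\ref{theorem-3} together with Theorem~\ref{theorem-2}$(i)$. Your explicit choice of $\mathscr{F}$ as the cofinite filter on an infinite--coinfinite set $A$, and the accompanying verification that $A\cup\{\mathscr{F}\}$ is a compact neighbourhood of $\mathscr{F}$, are in fact more careful than the paper itself, which leaves the local-compactness point implicit.
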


\begin{remark}\label{remark-8}{\rm
A Hausdorff partially ordered space $X$ is called \emph{$\mathscr{H}$-complete} if $X$ is a closed subspace of every Hausdorff partially ordered space in which it is contained \cite{GutikPagonRepovs2010}. A linearly ordered topological semilattice $E$ is $\mathscr{H}$-complete if and only if $E$ is an $\mathscr{H}$-complete partially ordered space \cite{GutikPagonRepovs2010}. In \cite{Yokoyama??} Yokoyama showed that a partially ordered space $X$ without an infinite antichain is an $\mathscr{H}$-complete partially ordered space if and only if $X$ is a directed complete and down-complete poset such that  $\sup L$ and $\inf L$ are contained in the closure of $L$ for any nonempty chain $L$ in $X$. Theorems~\ref{theorem-3} and~\ref{theorem-5} imply that there exists an $\mathscr{H}$-complete topological semilattice without an infinite antichain which is not an $\mathscr{H}$-complete partially ordered space. Also Theorems~\ref{theorem-3} implies that there exists a countable $\mathscr{H}$-complete locally compact topological semilattice $E$ without an infinite antichain which contains a maximal chain $L$ which is not directed complete, and $L$ does not have a maximal element.}
\end{remark}

Let $\lambda$ be any infinite cardinal and let $0\notin\lambda$. On the set $E_\lambda=\{0\}\cup\lambda$ endowed with the discrete topology we define the semilattice operation by the formula:
\begin{equation*}
x\cdot y=
\left\{
  \begin{array}{ll}
    x, & \hbox{if~} x=y;\\
    0, & \hbox{if~} x\neq y.
  \end{array}
\right.
\end{equation*}

\begin{theorem}\label{theorem-11}
 Let $\mathscr{F}$ be a free filter on $\mathbb{N}$ and $F\in\mathscr{F}$ be a set with infinite complement $\mathbb{N}\setminus F$. Then for each infinite cardinal $\lambda$ the following statements hold:
\begin{enumerate}
\item[$(i)$] the closed subsemilattice $E=\left(\mathbb{N}_{\mathscr{F},\max}\times\{0\}\right)\cup \left((\mathbb{N}\setminus F)\times\lambda\right)$ of the direct product $\mathbb{N}_{\mathscr{F},\max}\times E_{\lambda}$ is $\mathscr{H}$-complete;
\item[$(ii)$] for each subset $\kappa\subset\lambda$ the subset $I_{\kappa}=(\mathbb{N}_{\mathscr{F},\max}\times\{0\})\cup\left((\mathbb{N}
    \setminus F)\times\kappa\right)$ is an open-and-closed ideal in $E$, and the quotient semilattice $E/I_{\kappa}$ with the quotient topology is discrete and not $\mathscr{H}$-complete;
\item[$(iii)$] the semilattice $E$ is not $\mathscr{A\!H}$-complete.
\end{enumerate}
\end{theorem}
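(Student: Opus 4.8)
The plan is to follow the scheme of Theorems~\ref{theorem-3} and~\ref{theorem-5}, the only genuinely new difficulty being that the fibres $\{n\}\times\lambda$ of $E$ are now infinite. As in those theorems, a short case analysis on second coordinates shows that $E$ is a subsemilattice of $\mathbb{N}_{\mathscr{F},\max}\times E_\lambda$, and $E$ is closed because its complement $(F\times\lambda)\cup(\{\mathscr{F}\}\times\lambda)$ is open: the points with finite first coordinate are isolated and $(F\cup\{\mathscr{F}\})\times\{\alpha\}$ is a neighbourhood of $(\mathscr{F},\alpha)$ contained in that complement. I also record that $(\mathscr{F},0)$ is the least element of $E$, since $(\mathscr{F},0)\cdot(n,\gamma)=(\max\{\mathscr{F},n\},0\cdot\gamma)=(\mathscr{F},0)$ for every $(n,\gamma)\in E$.

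For $(i)$ I would argue by contradiction: if $E$ were not $\mathscr{H}$-complete then, since the closure of a subsemilattice is a subsemilattice (Corollary~19 of~\cite{GutikPavlyk2003}), there would be a topological semilattice $S$ containing $E$ as a dense subsemilattice with $S\setminus E\ne\varnothing$; fix $a\in S\setminus E$. The new ingredient is the claim that \emph{for every neighbourhood $U$ of $a$ the set $A=\{n:(n,\gamma)\in U\cap E\text{ for some }\gamma\}$ is infinite.} Indeed, were $A$ finite, then $T:=E\cap(A\times E_\lambda)$ would be a subsemilattice of $E$ which is discrete (the points of $A$ are isolated in $\mathbb{N}_{\mathscr{F}}$) and in which every chain has at most $2|A|$ elements; by Stepp's characterization of $\mathscr{H}$-complete discrete semilattices~\cite{Stepp1975}, $T$ is $\mathscr{H}$-complete, hence closed in $S$, and then $U\cap E\subseteq T$ would give $a\in\operatorname{cl}_S(U\cap E)\subseteq T\subseteq E$, a contradiction. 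Next, as in Theorems~\ref{theorem-3} and~\ref{theorem-5}, Theorem~\ref{theorem-2} shows that the copy $\mathbb{N}_{\mathscr{F},\max}\times\{0\}$ of $\mathbb{N}_{\mathscr{F},\max}$ is an $\mathscr{H}$-complete, hence closed, subsemilattice of $S$, so $a$ has a neighbourhood $U_0$ with $U_0\cap E\subseteq(\mathbb{N}\setminus F)\times\lambda$; and since $(\mathscr{F},0)$ is the least element of $E$, the continuous map $y\mapsto y\cdot(\mathscr{F},0)$ is constant on the dense set $E$, whence $a\cdot(\mathscr{F},0)=(\mathscr{F},0)$. To finish $(i)$, I would use continuity of the multiplication at $(a,(\mathscr{F},0))$: the set $W=(F\cup\{\mathscr{F}\})\times\{0\}$ is relatively open in $E$ and is a neighbourhood of $(\mathscr{F},0)=a\cdot(\mathscr{F},0)$, so, fixing an open $\widetilde W\subseteq S$ with $\widetilde W\cap E=W$, we obtain neighbourhoods $U\subseteq U_0$ of $a$ and $V$ of $(\mathscr{F},0)$ with $U\cdot V\subseteq\widetilde W$. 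As $V\cap E$ contains $(G\cup\{\mathscr{F}\})\times\{0\}$ for some $G\in\mathscr{F}$, fix $m_0\in G$; by the claim $A$ is infinite and thus unbounded, so there is $(n,\alpha)\in U\cap E$ with $n>m_0$, and then $n\in\mathbb{N}\setminus F$ by the choice of $U_0$, while
\begin{equation*}
(n,0)=(\max\{n,m_0\},\alpha\cdot 0)=(n,\alpha)\cdot(m_0,0)\in(U\cdot V)\cap E\subseteq\widetilde W\cap E=W=(F\cup\{\mathscr{F}\})\times\{0\},
\end{equation*}
contradicting $n\notin F$; hence $E$ is $\mathscr{H}$-complete.

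Parts $(ii)$ and $(iii)$ are routine. The inclusion $E\cdot I_\kappa\subseteq I_\kappa$ is a short case check (using $\gamma\cdot 0=0$, that unequal second coordinates multiply to $0$, and that equal non-zero ones reproduce a point of $(\mathbb{N}\setminus F)\times\kappa$), and $I_\kappa$ is open-and-closed in $E$ because $\mathbb{N}_{\mathscr{F},\max}\times\{0\}$ is relatively open in $E$ while $(\mathbb{N}\setminus F)\times\kappa$ and $E\setminus I_\kappa=(\mathbb{N}\setminus F)\times(\lambda\setminus\kappa)$ consist of isolated points; since $I_\kappa$ is open, $E/I_\kappa$ carries the discrete topology. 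For $\kappa\ne\lambda$, picking $\beta\in\lambda\setminus\kappa$ makes $\{[(m,\beta)]:m\in\mathbb{N}\setminus F\}\cup\{[I_\kappa]\}$ an infinite chain in $E/I_\kappa$, so $E/I_\kappa$ has an infinite maximal chain and is not $\mathscr{H}$-complete by~\cite{Stepp1975}; and $(iii)$ follows from $(ii)$ exactly as in Theorem~\ref{theorem-3}, by composing the quotient homomorphism $E\to E/I_\varnothing$ with a dense embedding of the non-$\mathscr{H}$-complete discrete semilattice $E/I_\varnothing$ into some topological semilattice $Y$, which produces a continuous homomorphism out of $E$ with non-closed image. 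The main obstacle is the claim inside $(i)$: for $E_2$ the fibres are singletons, so a neighbourhood meeting $E$ in an infinite set automatically meets infinitely many fibres, whereas for infinite $\lambda$ one must separately rule out $a$ sitting over a single infinite fibre — which is precisely the role of the auxiliary $\mathscr{H}$-complete block $T$.
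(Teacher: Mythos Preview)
Your proof is correct, but your argument for $(i)$ takes a genuinely different route from the paper's.

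The paper does not repeat the Theorem~\ref{theorem-3}/\ref{theorem-5} endgame at all. Instead, after observing that $E^0=\mathbb{N}_{\mathscr{F},\max}\times\{0\}$ is $\mathscr{H}$-complete (hence closed in the ambient semilattice $T$), it invokes Theorem~\ref{theorem-5} to conclude that each ``slice'' $E_a=E^0\cup\big((\mathbb{N}\setminus F)\times\{a\}\big)$ is $\mathscr{H}$-complete and therefore closed in $T$. Choosing $V(e)\subseteq U(e)$ with $V(e)\cdot V(e)\subseteq U(e)$ and $U(e)\cap E^0=\varnothing$, closedness of the $E_a$ forces $V(e)$ to meet $(\mathbb{N}\setminus F)\times\{a\}$ for infinitely many $a\in\lambda$; picking two points with distinct second coordinates and multiplying lands in $E^0\subseteq T\setminus U(e)$, a contradiction. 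So the paper exploits the \emph{second} coordinate: different $\lambda$-labels multiply to $0$.

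You instead control the \emph{first} coordinate. Your auxiliary block $T=E\cap(A\times E_\lambda)$ (which is indeed a subsemilattice, since $\max\{n,m\}\in\{n,m\}\subseteq A$) together with Stepp's criterion replaces the paper's appeal to Theorem~\ref{theorem-5}, and then you run the Theorem~\ref{theorem-3} contradiction through $a\cdot(\mathscr{F},0)=(\mathscr{F},0)$ and a carefully chosen product $(n,\alpha)\cdot(m_0,0)$. The paper's route is shorter and avoids computing $a\cdot(\mathscr{F},0)$ or tracking an explicit basic neighbourhood of $(\mathscr{F},0)$; your route is more self-contained, relying only on Stepp's elementary criterion rather than on the full strength of Theorem~\ref{theorem-5}. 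Parts $(ii)$ and $(iii)$ agree with the paper's treatment.
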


\begin{proof}[Proof]
$(i)$ Assuming that the topological semilattice $E$ is not $\mathscr{H}$-complete, find a topological semilattice $T$ containing $E$ as a dense subsemilattice with non-empty complement $T\setminus E$. Fix any element $e\in T\setminus E$. By Theorem~\ref{theorem-1}, the topological semilattice $E^0=\mathbb{N}_{\mathscr{F},\max}\times\{0\}$ is $\mathscr{H}$-complete and hence is closed in $T$. Then there exists an open neighbourhood $U(e)$ of the point $e$ in $T$ such that $U(e)\cap E^0=\varnothing$. By the continuity of the semilattice operation in $T$, there exists an open neighbourhood $V(e)\subseteq U(e)$ of the point $e$ in $T$ such that $V(e)\cdot V(e)\subseteq U(e)$. By Theorem~\ref{theorem-5}, for each $a\in\lambda$, the subsemilattice $E_{a}=\left(\mathbb{N}_{\mathscr{F},\max}\times\{0\}\right)\cup \left((\mathbb{N}\setminus F)\times\{a\}\right)$ of the direct product $\mathbb{N}_{\mathscr{F},\max}\times E_{\lambda}$ is $\mathscr{H}$-complete and hence is closed in $T$. This implies that $V(e)\cap E_{a}\neq\varnothing$  for infinitely many points $a\in\lambda$, and hence $(V(e)\cdot V(e))\cap E^0\neq\varnothing$. This contradicts the choice of the neighbourhood $U(e)$. The obtained contradiction implies that the topological semilattice $E$ is $\mathscr{H}$-complete.

\medskip
$(ii)$ The definition of the semilattice $E$ implies that $I_{\kappa}=(\mathbb{N}_{\mathscr{F},\max}\times\{0\})\cup\left((\mathbb{N}\setminus F)\times\kappa\right)$ is an open-and-closed ideal in $E$. Then we have that the quotient semilattice $E/I_{\kappa}$ with the quotient topology is a discrete topological semilattice. Also, $E/I_{\kappa}$ is topologically isomorphic to the orthogonal sum of $\lambda$ infinitely many of $(\mathbb{N},\max)$ with isolated zero. This implies that the semilattice $E/I_{\kappa}$ is not $\mathscr{H}$-complete.

\medskip
Statement $(iii)$ follows from statement $(ii)$.
\end{proof}

\begin{remark}\label{remark-12}
{\rm The topological semilattices $E$, and $I_{\kappa}$ from Theorem~\ref{theorem-11} are metrizable locally compact spaces for each free countably generated filter $\mathscr{F}$ on $\mathbb{N}$ and any $\kappa\subset\lambda$.}
\end{remark}

\begin{remark} {\rm It can be shown that continuous homomorphisms into the discrete semilattice $\left(\{0,1\},\min\right)$ separate points of the topological semilattices $E$ considered in  Theorems~\ref{theorem-5} and \ref{theorem-11}.}
\end{remark}

Since for each subset $\kappa\subset\lambda$ the natural homomorphism $\pi\colon E\rightarrow E/I_{\kappa}$ is an open-and-closed map,
Theorem~\ref{theorem-11} implies the following corollary:

\begin{corollary}\label{corollary-1.13}
Let $\mathscr{F}$ be a free filter on $\mathbb{N}$ containing a set $F\in\mathscr{F}$ with infinite complement $\mathbb{N}\setminus F$. Then for each infinite cardinal $\lambda$ there exist $2^\lambda$ many continuous open-and-closed surjective homomorphic images of the
topological semilattice $E=\left(\mathbb{N}_{\mathscr{F},\max}\times\{0\}\right)\cup \left((\mathbb{N}\setminus F)\times\lambda\right)\subset \mathbb{N}_{\mathscr{F},\max}\times E_\lambda$,  which are not $\mathscr{H}$-complete.
\end{corollary}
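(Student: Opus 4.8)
The plan is to read the whole statement off Theorem~\ref{theorem-11} together with the observation, recorded just before the corollary, that collapsing a clopen ideal is an open-and-closed map; essentially no new argument is needed. Concretely, for every proper subset $\kappa\subsetneq\lambda$ I would take the ideal $I_\kappa=(\mathbb{N}_{\mathscr{F},\max}\times\{0\})\cup((\mathbb{N}\setminus F)\times\kappa)$ of $E$ and the natural projection $\pi_\kappa\colon E\to E/I_\kappa$ onto the Rees quotient with the quotient topology. By Theorem~\ref{theorem-11}$(ii)$, $I_\kappa$ is an open-and-closed ideal, $E/I_\kappa$ is a discrete topological semilattice, and $E/I_\kappa$ is not $\mathscr{H}$-complete (it is the orthogonal sum, with adjoined isolated zero, of $|\lambda\setminus\kappa|\geq 1$ copies of $(\mathbb{N}\setminus F,\max)\cong(\mathbb{N},\max)$, which carries an infinite maximal chain, so Stepp's criterion applies). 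The map $\pi_\kappa$ is continuous and surjective since it is a quotient map, it is a semilattice homomorphism since $I_\kappa$ is an ideal, and it is open-and-closed since $I_\kappa$ is clopen in $E$: for $U\subseteq E$ one has $\pi_\kappa^{-1}(\pi_\kappa(U))\in\{U,\,U\cup I_\kappa\}$, both of which are open (resp.\ closed) whenever $U$ is, so $\pi_\kappa(U)$ is open (resp.\ closed) in the quotient topology.

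It then remains to check that this recipe produces $2^\lambda$ genuinely different images. I would realize $E/I_\kappa$ concretely on the underlying set $(E\setminus I_\kappa)\cup\{z\}=\big((\mathbb{N}\setminus F)\times(\lambda\setminus\kappa)\big)\cup\{z\}$, with $z$ a fixed symbol outside $E$. Since $\mathbb{N}\setminus F\neq\varnothing$, the assignment $\kappa\mapsto\lambda\setminus\kappa$ is injective, so for $\kappa_1\neq\kappa_2$ the quotient semilattices $E/I_{\kappa_1}$ and $E/I_{\kappa_2}$ have distinct underlying sets and the maps $\pi_{\kappa_1},\pi_{\kappa_2}$ have distinct fibres over the zero; in particular they are distinct open-and-closed continuous surjective homomorphic images of $E$ that are not $\mathscr{H}$-complete. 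As $\lambda$ is infinite, $\big|\{\kappa:\kappa\subsetneq\lambda\}\big|=2^\lambda$, giving the required $2^\lambda$ many images; if one wants all the witnessing quotients to have the same non-finitary shape, one may instead range over the $2^\lambda$ subsets $\kappa$ of a fixed half $A$ of a partition $\lambda=A\sqcup B$ with $|A|=|B|=\lambda$, so that each $\lambda\setminus\kappa\supseteq B$ is infinite.

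I do not expect any genuine obstacle here: all of the substance — the $\mathscr{H}$-completeness of $E$ and the failure of $\mathscr{H}$-completeness for the quotients — is already contained in Theorem~\ref{theorem-11}, and the open-and-closedness of $\pi_\kappa$ is already recorded. The only point deserving a line of care is the bookkeeping in the previous paragraph, namely that the $2^\lambda$ subsets $\kappa$ really do yield pairwise distinct images; and it is worth flagging explicitly that one should \emph{not} expect $2^\lambda$ pairwise non-isomorphic images from this construction, since up to topological isomorphism $E/I_\kappa$ is determined by the single cardinal $|\lambda\setminus\kappa|$, so it is the indexing by the ideals $I_\kappa$, rather than any isomorphism invariant, that supplies the count $2^\lambda$.
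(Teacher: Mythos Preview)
Your proposal is correct and follows exactly the paper's route: the paper itself offers no separate proof of this corollary, merely recording (in the sentence preceding it) that the natural projections $\pi\colon E\to E/I_\kappa$ are open-and-closed and then invoking Theorem~\ref{theorem-11}. Your additional bookkeeping on why the $2^\lambda$ quotients are pairwise distinct, and your caveat that one should not expect them to be pairwise non-isomorphic, are details the paper leaves entirely implicit.
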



\section*{Acknowledgements}

We acknowledge Taras Banakh for his comments and suggestions.
The authors are also grateful to the referee for several useful comments and suggestions.



\end{document}